\newtheorem{thm}{Theorem}[section]
\newtheorem{cor}[thm]{Corollary}
\newtheorem{prop}[thm]{Proposition}
\theoremstyle{definition}
\theoremstyle{remark}
\newtheorem{rem}[thm]{Remark}
\numberwithin{equation}{section}
\newcommand{\R}{\mathbb R}
\newcommand{\e}{\varepsilon}
\newcommand{\p}{\partial}
\newcommand{\comment}[1]{}
\def\h{\hspace*{.24in}} \def\w{\vspace{.12in}}
\begin{document} 

\title[Large dimension behavior of the Hessian eigenvalues]{Large dimension behavior of the Hessian eigenvalues of the unit balls}
\author{Nam Q. Le}
\address{Department of Mathematics, Indiana University, 831 E 3rd St,
Bloomington, IN 47405, USA}
\email{nqle@iu.edu}
\thanks{The author was supported in part by the National Science Foundation under grants DMS-2054686 and DMS-2452320.}

\subjclass[2020]{ 35P15, 35P30, 35J96}
\keywords{Monge--Amp\`ere eigenvalue, Hessian eigenvalue}

\maketitle
\begin{abstract}
We show that a sequence of $k$-Hessian eigenvalues of the unit ball in $\R^n$ stays bounded as long as the ratio $n/k$ stays bounded. Moreover, 
we identify their growth of order at least $(2-\frac{1}{k})$ in $n/k$.
In the case $k=n$, we show that the Monge--Amp\`ere eigenvalues of the unit balls tend to $4$ in the large dimension limit.
  \end{abstract}

\section{Introduction and statement of the main result}

For uniformly convex domains $\Omega$ in $\R^n$ ($n\geq 2$) with  smooth boundaries, Lions \cite{Ln} 
showed that there exist a unique positive constant $\lambda(n;\Omega)$ and a unique (up to positive multiplicative constants) nonzero 
convex function $u\in C^{1,1}(\overline{\Omega})\cap C^{\infty}(\Omega)$ solving the Monge--Amp\`ere eigenvalue problem
\begin{equation}\label{EPLi}
 \left\{
 \begin{alignedat}{2}
   \det D^2 u ~&=[\lambda(n;\Omega)]^n |u|^n \quad~&&\quad\text{in} ~\Omega, \\\
 u&= 0 \quad~&&\quad\text{on}~ \p \Omega.
 \end{alignedat}
 \right.
  \end{equation}
The constant $\lambda(n;\Omega)$ and nonzero convex solutions to \eqref{EPLi} are called the Monge--Amp\`ere eigenvalue and Monge--Amp\`ere eigenfunctions of $\Omega$. In the literature, sometimes 
$[\lambda(n;\Omega)]^n$ is also called the  Monge--Amp\`ere eigenvalue of $\Omega$. Our choice in this note enforces the  Monge--Amp\`ere eigenvalue to have the same scaling as the Laplace eigenvalue. 
\medskip

It is well-known that the first eigenvalue $\lambda(1; n):= \lambda(1; B_1^n)$ of the 
Laplace operator with zero Dirichlet boundary condition on the unit ball 
\[B_1^n:=\{x=(x_1, \cdots, x_n)\in\R^n: x_1^2+\cdots+ x_n^2\leq 1\}\]
of $\R^n$ is equal to the square of the first positive zero $j_{n/2-1}$ of the Bessel function $J_{n/2-1}$. 
By Watson \cite[Section 15.3]{W}, $\lambda (1; n)=[j_{n/2-1}]^2$ grows quadratically in $n$. 
\medskip

We are interested in the large dimension behavior of the Monge--Amp\`ere eigenvalues \[\lambda(n):=\lambda(n; B_1^n)\] of the unit balls in $\R^n$. 
To the best of our knowledge, there are no explicit formulas for the Monge--Amp\`ere eigenvalues and eigenfunctions of any convex domains. Numerical results are available, however, for the Monge--Amp\`ere eigenvalue
of the unit ball in $\R^2$; see, Glowinski--Leung--Liu--Qian  \cite{GLLQ}, Liu--Leung--Qian \cite{LLQ}, and Chen--Lin--Yang--Yi \cite{CLYY}, for example.
Thus, it is not a priori clear if any asymptotic behavior can be obtained for $\lambda(n)$.  
The picture is quite contrast to the Laplace eigenvalue case. 
As will be seen, $\lambda(n)$ tends to $4$ in the large dimension limit.  An asymptotically sharp lower bound for $\lambda(n)$ can be obtained by standard arguments using Euler's first integral. 
For asymptotically sharp upper bounds for $\lambda(n)$, we 
invoke its variational characterizations.

\medskip
For uniformly convex domains $\Omega$ with smooth boundaries, Tso \cite{Ts2} discovered
 the following variational characterization of  $\lambda(n;\Omega)$ using the Rayleigh quotient: 
\begin{multline}
 \label{lam1}
 [\lambda(n;\Omega)]^n=\inf\Bigg\{ \frac{\int_{\Omega} |w|\det D^2 w\,dx}{\int_{\Omega}|w|^{n+1}\,dx}: w\in C^{0,1}(\overline{\Omega})\cap C^2(\Omega)\setminus\{0\},\\w~\text{is convex in } 
 \Omega,~w=0~\text{on}~\p\Omega\Bigg\}.
\end{multline}

The large dimension behavior of $\lambda(n)$ naturally leads to similar investigations concerning 
the Hessian eigenvalues of the unit ball. 

\medskip
Let $1\leq k\leq n$ ($n\geq 2$). We recall the $k$-Hessian eigenvalue problem on uniformly convex domains $\Omega$ in $\R^n$. 
(This problem can be solved in more general domains but our focus here will be on the unit ball.)
For a function $u\in C^2(\Omega)$, let $S_k(D^2 u)$ denote the $k$-th elementary symmetric function of the eigenvalues $\lambda(D^2 u)=(\lambda_1(D^2 u), \cdots,\lambda_n(D^2 u))$ of the Hessian matrix $D^2 u$:
$$S_k(D^2 u)=\sigma_k(\lambda(D^2 u)):=\sum_{1\leq i_1<\cdots<i_k\leq n}\lambda_{i_1}(D^2 u)\cdots \lambda_{i_k}(D^2 u).$$
A function $u\in C^2(\Omega)\cap C(\overline{\Omega})$ is called $k$-admissible if $\lambda(D^2 u)\in \Gamma_k^n$
where $\Gamma_k^n$ is an open symmetric convex cone in $\R^n$, with vertex at the origin, given by
\begin{equation}
\label{Gak}
\Gamma_k^n=\{\lambda=(\lambda_1, \cdots,\lambda_n)\in \R^n\mid \sigma_j(\lambda)>0\quad\text{for all }j=1, \cdots, k\}.
\end{equation}

\medskip
Extending 
the results of Lions \cite{Ln} and Tso \cite{Ts2} from the case $k=n$ to the general case $1\leq k\leq n$, Wang \cite{W1} showed that 
there exist a unique positive constant $\lambda(k;\Omega)$ (called the $k$-Hessian eigenvalue) and a unique (up to positive multiplicative constants) nonzero, $k$-admissible function 
 $u\in C^{\infty}(\Omega)\cap C^{1,1}(\overline{\Omega})$ solving the $k$-Hessian eigenvalue problem
  \begin{equation}
 \label{kEVP_eq}
   S_k (D^2 u)~=[\lambda(k;\Omega)]^k |u|^{k} \h~\text{in} ~\Omega,~
u =0\h~\text{on}~\p \Omega;
\end{equation}
the function $u$ in (\ref{kEVP_eq}) is called a $k$-Hessian eigenfunction of $\Omega$. 
Moreover, Wang \cite{W1} proved the following fundamental variational characterization of $\lambda(k;\Omega)$: 
\begin{multline}
\label{klamR}
 [\lambda(k;\Omega)]^k =\inf\Bigg\{ \frac{\int_{\Omega} |w| S_k(D^2 w)\,dx}{\int_{\Omega} |w|^{k+1}\,dx}: w\in C(\overline{\Omega})\cap C^2(\Omega)\setminus\{0\}, \\ w\text{ is }k\text{-admissible}, w=0~\text{on}~\p\Omega\Bigg\}.
 \end{multline}

From the spectral characterization of $\lambda(k;\Omega)$ in \cite[Theorem 1.1]{L}, we have the following monotonicity property of the Hessian eigenvalues of uniformly convex domains $\Omega$ in $\R^n$:
\begin{equation}
\label{monoineq}
\lambda (n;\Omega)\leq \lambda (n-1;\Omega)\leq \cdots\leq \lambda (2;\Omega)\leq  \lambda (1;\Omega).\end{equation}
So far, there are no sharp estimates available for $k$-Hessian eigenvalues when $1<k<n$. However, for the unit ball, we identify a growth of order at least $(2-\frac{1}{k})$ in $n/k$ of $\lambda (k; B_1^n)$. 
\medskip

Our main result states as follows.
\begin{thm} \label{mainthm} 
 Let $k\leq n$ be positive integers where $n\geq 2$. The following are true.
\begin{enumerate}
\item 
The Monge--Amp\`ere eigenvalue $\lambda(n):=\lambda(n; B_1^n)$ of the unit ball 
$B_1^n$ in $\R^n$ verifies \[4\cdot(2n)^{-1/n} \leq \lambda(n) \leq   4.\]
Consequently,
 \[\lim_{n\to\infty}\lambda(n) =4.\]
\item  
The $k$-Hessian eigenvalue \[\lambda(k; n):=\lambda(k; B_1^n)\] of the unit ball $B_1^n$ in $\R^n$ verifies
\begin{equation*}
\begin{split}
\max\{(n/k-2) (n/k)^{\frac{k-1}{k}}, 4\cdot(2n)^{-1/n}\} &\leq \lambda(k; n)\\& \leq \Big[\frac{(n+ k+1)!}{k! (k+1)! (n-k+1)!}\Big]^{\frac{1}{k}}\leq 2^{\frac{2n+ k+1}{k}}.
\end{split}
\end{equation*}
Consequently, a sequence $\big\{\lambda(k_i; n_i)\big\}_{i=1}^\infty$ (where $1\leq k_i\leq n_i$ are integers) is bounded if and only if $\big\{n_i/k_i\big\}_{i=1}^\infty$ is bounded.
\end{enumerate}
 \end{thm}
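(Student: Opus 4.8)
\emph{Setup: radial reduction and two identities.} The plan is to exploit the rotational symmetry of $B_1^n$. Since $S_k$ and $B_1^n$ are rotation invariant, the uniqueness in \cite{W1} forces the $k$-Hessian eigenfunction to be radial, say $u=\phi(|x|)$ with $\phi$ increasing, $\phi(1)=0$, $\phi<0$ on $[0,1)$ and $\phi'(0)=0$. For a radial $C^2$ function $w=\psi(r)$ the Hessian $D^2 w$ has eigenvalues $\psi''(r)$ (once) and $\psi'(r)/r$ ($n-1$ times), and a direct computation gives the divergence form
\[
S_k(D^2 w)=\tfrac1k\binom{n-1}{k-1}\,r^{1-n}\,\frac{d}{dr}\!\left[r^{\,n-k}\bigl(\psi'(r)\bigr)^{k}\right].
\]
Multiplying \eqref{kEVP_eq} by $r^{n-1}$, integrating over $B_1^n$ in polar coordinates, and integrating by parts (the boundary terms vanish because $\phi(1)=0$ and $\phi'(0)=0$) produces the energy identity
\[
[\lambda(k;n)]^{k}=\frac{\tfrac1k\binom{n-1}{k-1}\int_0^1 r^{\,n-k}\bigl(\phi'(r)\bigr)^{k+1}\,dr}{\int_0^1 r^{\,n-1}\bigl(-\phi(r)\bigr)^{k+1}\,dr}.
\]
The first display will drive the upper bound; the second will drive the lower bounds.

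\emph{Upper bounds.} I would test \eqref{klamR} with a mollified cone. Set $w_\e=\sqrt{r^2+\e^2}-\sqrt{1+\e^2}$; this is strictly convex, hence $k$-admissible for every $k\le n$, and vanishes on $\p B_1^n$. As $\e\to0^{+}$ one has $w_\e\to|x|-1$ and, using the divergence form above, the Rayleigh quotient of $w_\e$ converges (dominated convergence) to that of $|x|-1$, which by the Euler-integral evaluations $\int_0^1(1-r)r^{n-k-1}dr$ and $\int_0^1(1-r)^{k+1}r^{n-1}dr$ equals $\dfrac{(n+k+1)!}{k!\,(k+1)!\,(n-k+1)!}$; hence $[\lambda(k;n)]^k\le\dfrac{(n+k+1)!}{k!\,(k+1)!\,(n-k+1)!}$. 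For the cruder bound, write this quantity as $\binom{n+k+1}{k}\binom{n+1}{k+1}/(n-k+1)$ and use $\binom ab\le2^a$ together with $n-k+1\ge2$ when $k<n$ (and the trivial $\binom{2n+1}{n}\le2^{2n+1}$ when $k=n$), giving $\le2^{2n+k+1}$. For $k=n$ the test-function estimate specializes to $[\lambda(n)]^n\le\binom{2n+1}{n}$; since $\binom{2n+1}{n}/4^n$ decreases in $n$ from its value $3/4$ at $n=1$, one gets $\binom{2n+1}{n}<4^n$, i.e. $\lambda(n)<4$.

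\emph{Lower bounds.} Here I use the energy identity. Assume first $n>2k$ (otherwise $(n/k-2)(n/k)^{(k-1)/k}\le0$ and there is nothing to prove). From $-\phi(r)=\int_r^1\phi'$, Hölder with exponents $k+1$ and $(k+1)/k$ and weight $s^{(n-k)/(k+1)}$, together with $\int_r^1 s^{-(n-k)/k}ds\le\frac{k}{n-2k}r^{-(n-2k)/k}$, yields
\[
(-\phi(r))^{k+1}\le\Bigl(\tfrac{k}{n-2k}\Bigr)^{k}r^{-(n-2k)}\int_0^1(\phi'(s))^{k+1}s^{\,n-k}\,ds .
\]
Multiplying by $r^{n-1}$, integrating over $(0,1)$, and substituting into the energy identity gives $[\lambda(k;n)]^k\ge 2k^{-k}\binom{n-1}{k-1}(n-2k)^k$; then $\binom{n-1}{k-1}=\tfrac kn\binom nk\ge\tfrac kn(n/k)^k=(n/k)^{k-1}$ (from $\tfrac{n-j}{k-j}\ge\tfrac nk$ for $0\le j<k$) produces $[\lambda(k;n)]^k\ge(n/k-2)^k(n/k)^{k-1}$, the asserted $(2-\tfrac1k)$-order bound. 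For $\lambda(k;n)\ge4(2n)^{-1/n}$, the monotonicity \eqref{monoineq} reduces matters to $k=n$, where the weight $r^{n-k}$ is trivial: plain Hölder gives $(-\phi(r))^{n+1}\le(1-r)^n\int_0^1(\phi')^{n+1}$, and the energy identity with $\int_0^1 r^{n-1}(1-r)^n dr=\frac{(n-1)!\,n!}{(2n)!}$ yields $[\lambda(n)]^n\ge\binom{2n}{n}\ge4^n/(2n)$.

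\emph{Conclusion and main obstacle.} Combining the estimates, $4(2n)^{-1/n}\le\lambda(n)<4$, so $\lambda(n)\to4$; and in part (2), $\lambda(k;n)\le2^{(2n+k+1)/k}\le2^{2(n/k)+2}$ while $\lambda(k;n)\ge n/k-2$ (discarding the factor $(n/k)^{(k-1)/k}\ge1$), so $\{\lambda(k_i;n_i)\}$ is bounded precisely when $\{n_i/k_i\}$ is. I expect the delicate point to be the lower bound: one must extract the correct radial divergence and energy identities for the $k$-Hessian operator and, in the Hölder step, choose the exponent $(n-k)/(k+1)$ so that the resulting weighted Hardy-type inequality lands exactly on the power $(n/k)^{2-1/k}$ rather than a weaker power of $n/k$, with the elementary inequality $\binom nk\ge(n/k)^k$ converting the combinatorial constant into this growth. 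The upper bound is comparatively routine once one recognizes $|x|-1$ as the right competitor — with the one subtlety that it must be mollified when $k=n$, where it fails to be $n$-admissible.
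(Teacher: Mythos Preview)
Your proposal is correct and follows essentially the same route as the paper: radial reduction to the energy identity, the mollified cone $(|x|^2+\e)^{1/2}-(1+\e)^{1/2}$ as test function for the upper bound (with the Beta-integral evaluation), and the weighted H\"older inequality on $-\phi(r)=\int_r^1\phi'$ for the lower bound, landing on exactly the same intermediate estimate $[\lambda(k;n)]^k\ge 2\binom{n-1}{k-1}(n/k-2)^k$. The only cosmetic differences are your monotonicity argument for $\binom{2n+1}{n}<4^n$ (the paper instead bounds three central binomial terms in $(1+1)^{2n}$) and your factorization for the crude $2^{2n+k+1}$ bound, both of which are equally valid.
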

 We now indicate some consequences of the main result. 
 
 \medskip
Since the volume of the unit ball in $\R^n$ is $\pi^{n/2}/\Gamma(1+n/2)$, a consequence of Theorem \ref{mainthm} and an extremal property of the Monge--Amp\`ere eigenvalues established in \cite[Theorem 1.4]{LSNS} is the following corollary.
\begin{cor} Let $\Omega$ be a bounded convex domain in $\R^n$. Then
\[[\lambda(n;\Omega)]^n |\Omega|^2 \leq [\lambda(n; B_1^n)]^n|B_1^n|^2 \leq \frac{(4\pi)^n }{[\Gamma (1+n/2)]^2}.\]
\end{cor}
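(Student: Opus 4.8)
The statement to be proved is the displayed chain of inequalities in the Corollary:
\[
[\lambda(n;\Omega)]^n |\Omega|^2 \leq [\lambda(n; B_1^n)]^n|B_1^n|^2 \leq \frac{(4\pi)^n }{[\Gamma (1+n/2)]^2}.
\]
My plan is to treat this as a two-step argument: first the left inequality, which is a known extremal (isoperimetric-type) property of the Monge--Amp\`ere eigenvalue, and second the right inequality, which is a pure plug-in computation using the upper bound $\lambda(n)\le 4$ from Theorem \ref{mainthm}(1) together with the explicit volume formula for the Euclidean ball.

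\textbf{Step 1: the left inequality.} I would invoke the extremal property of the Monge--Amp\`ere eigenvalues established in \cite[Theorem 1.4]{LSNS}, which (as recalled just before the Corollary) asserts precisely that among all bounded convex domains of a given volume, the quantity $[\lambda(n;\Omega)]^n|\Omega|^2$ is maximized by the ball; equivalently, the affine-invariant functional $\Omega\mapsto[\lambda(n;\Omega)]^n|\Omega|^2$ is bounded above by its value on $B_1^n$. (This is the Monge--Amp\`ere analogue of the Faber--Krahn inequality, with the scaling-correct normalization: $\lambda(n;\cdot)$ scales like a length$^{-1}$, so $[\lambda(n;\Omega)]^n$ scales like volume$^{-1}$ and $[\lambda(n;\Omega)]^n|\Omega|$ is scale-invariant; an extra factor $|\Omega|$ is needed for affine invariance, reflecting the affine invariance of $\det D^2u$.) There is a subtlety to address: \cite[Theorem 1.4]{LSNS} may be stated for convex domains with some regularity (e.g. smooth, uniformly convex) and with $\lambda(n;\Omega)$ defined via \eqref{EPLi}, whereas the Corollary is stated for arbitrary bounded convex $\Omega$. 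I would handle this by recalling that $\lambda(n;\Omega)$ extends to all bounded convex domains through the variational characterization \eqref{lam1} (or by an exhaustion/approximation of $\Omega$ by smooth uniformly convex subdomains $\Omega_j\uparrow\Omega$, using monotonicity of $\lambda(n;\cdot)$ under inclusion and continuity of the volume), so that the inequality passes to the limit.

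\textbf{Step 2: the right inequality.} Here I would simply substitute. By Theorem \ref{mainthm}(1), $\lambda(n)=\lambda(n;B_1^n)\le 4$, hence $[\lambda(n;B_1^n)]^n\le 4^n$. The volume of the unit ball is $|B_1^n|=\pi^{n/2}/\Gamma(1+n/2)$, so $|B_1^n|^2=\pi^n/[\Gamma(1+n/2)]^2$. Multiplying gives
\[
[\lambda(n;B_1^n)]^n|B_1^n|^2 \le 4^n\cdot\frac{\pi^n}{[\Gamma(1+n/2)]^2}=\frac{(4\pi)^n}{[\Gamma(1+n/2)]^2},
\]
which is exactly the claimed bound.

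\textbf{Main obstacle.} Neither step involves hard analysis; the only genuine care needed is in Step 1, namely making sure that the extremal inequality from \cite[Theorem 1.4]{LSNS} applies to a general bounded convex domain $\Omega$ (rather than only to smooth, uniformly convex ones) and with the same normalization of $\lambda(n;\cdot)$ used in this paper. I expect this to be dispatched quickly either by citing the general statement directly or by a one-line approximation argument, after which Step 2 is immediate. Thus the proof is essentially a citation plus an arithmetic substitution, and I would present it as such.
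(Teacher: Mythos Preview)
Your proposal is correct and follows exactly the route the paper indicates: the left inequality is the extremal property from \cite[Theorem 1.4]{LSNS} (which is already stated there for general bounded convex domains, so your approximation concern, while reasonable, is unnecessary), and the right inequality is the substitution $\lambda(n)\le 4$ from Theorem~\ref{mainthm}(i) together with $|B_1^n|=\pi^{n/2}/\Gamma(1+n/2)$. The paper itself does not write out a proof beyond the sentence preceding the corollary.
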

From Theorem \ref{mainthm}, \eqref{monoineq}, and the Stirling's approximation formula, we can extend the limit in Theorem \ref{mainthm} (i) from Monge--Amp\`ere eigenvalues to Hessian eigenvalues as follows.
\begin{cor} \label{lim4cor} Let $1\leq k_i\leq n_i$ be integers such that $n_i\rightarrow\infty$ and $n_i/k_i\rightarrow 1$ when $i\rightarrow\infty$. Then
\[\lim_{i\to\infty}\lambda(k_i; n_i) =4.\]
\end{cor}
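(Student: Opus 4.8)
The plan is to sandwich $\lambda(k_i;n_i)$ between two quantities that each converge to $4$, using the two-sided bound of Theorem \ref{mainthm}(2) and the monotonicity \eqref{monoineq}. For the lower bound I would apply \eqref{monoineq} to $\Omega = B_1^{n_i}$, which gives $\lambda(k_i;n_i)\ge\lambda(n_i;n_i)=\lambda(n_i)$; Theorem \ref{mainthm}(1) then yields $\lambda(n_i)\ge 4\,(2n_i)^{-1/n_i}$, and since $n_i\to\infty$ one has $(2n_i)^{-1/n_i}=\exp\!\big(-\tfrac{\ln(2n_i)}{n_i}\big)\to 1$. Hence $\liminf_{i\to\infty}\lambda(k_i;n_i)\ge 4$.

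For the upper bound I would show that the explicit bound
\[
\lambda(k_i;n_i)\le\left[\frac{(n_i+k_i+1)!}{k_i!\,(k_i+1)!\,(n_i-k_i+1)!}\right]^{1/k_i}
\]
from Theorem \ref{mainthm}(2) already tends to $4$. Write $m_i=n_i-k_i\ge 0$; the hypotheses $n_i\to\infty$ and $n_i/k_i\to 1$ force $m_i/k_i=n_i/k_i-1\to 0$, so $k_i\to\infty$ and $m_i<k_i$ for large $i$. Taking logarithms and applying Stirling in the form $\ln N!=N\ln N-N+O(\ln N)$ to each factorial, the linear terms cancel up to a bounded constant, $(k_i+1)\ln(k_i+1)=k_i\ln k_i+O(\ln k_i)$, and the $O(\ln N)$ remainders contribute $o(1)$ after division by $k_i$; the problem thus reduces to the asymptotics of
\[
\frac{1}{k_i}\Big[(2k_i+m_i+1)\ln(2k_i+m_i+1)-2k_i\ln k_i-(m_i+1)\ln(m_i+1)\Big].
\]
Expanding $\ln(2k_i+m_i+1)=\ln 2+\ln k_i+\ln\!\big(1+\tfrac{m_i+1}{2k_i}\big)$ and using the elementary bound $0\le(2k_i+m_i+1)\ln\!\big(1+\tfrac{m_i+1}{2k_i}\big)\le 2(m_i+1)$, this expression equals $2\ln 2+s_i\ln 2-s_i\ln s_i+O(s_i)$ with $s_i:=\tfrac{m_i+1}{k_i}\to 0^+$, which converges to $2\ln 2=\ln 4$ because $s\ln(1/s)\to 0$ as $s\to 0^+$. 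Exponentiating gives $\limsup_{i\to\infty}\lambda(k_i;n_i)\le 4$, and together with the lower bound this yields the corollary.

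The one delicate point, and where I would take the most care, is this last limit. A crude estimate such as $\binom{n_i+k_i+1}{2k_i}\le (n_i+k_i+1)^{m_i+1}$ would only give the conclusion under the stronger hypothesis $m_i=o(k_i/\ln k_i)$, so it is essential to keep the factor $(m_i+1)!$ in the denominator — equivalently, to note that the quantity being raised to the $1/k_i$ power equals $C_{k_i}\binom{n_i+k_i+1}{2k_i}$, where $C_{k_i}=\tfrac{1}{k_i+1}\binom{2k_i}{k_i}$ is the Catalan number, whose $k_i$-th root by itself tends to $4$ — and then to exploit the cancellation $-s\ln s\to 0$. Everything else is routine bookkeeping of the Stirling error terms together with the results already quoted from Theorem \ref{mainthm}.
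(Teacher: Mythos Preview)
Your argument is correct and follows the same strategy as the paper: the lower bound comes from monotonicity \eqref{monoineq} together with Theorem~\ref{mainthm}(i), and the upper bound comes from Stirling's formula applied to the explicit bound in Theorem~\ref{mainthm}(ii). The only technical difference is that the paper first replaces $k_i$ by $\min\bigl(k_i,\,n_i-[\sqrt{n_i}]-1\bigr)$ (using monotonicity once more) so that $n_i-k_i\to\infty$ and the multiplicative Stirling approximation applies cleanly to every factorial, whereas you work directly with the logarithmic form and absorb the possibly bounded factor $(m_i+1)!$ into the term $-s_i\ln s_i\to 0$; both routes reach the same limit, and your Catalan-number rewriting is a nice alternative way to see it.
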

\begin{rem} Several remarks are in order.
\begin{itemize}
\item Since $\lambda(1; n)$ grows quadratically in $n$, it would be interesting to improve the growth power $(2-\frac{1}{k})$ of $n/k$ in Theorem \ref{mainthm} (ii) to a quadratic growth in $n/k$ of $\lambda(k; n)$. Note that, for a fixed $k$, $ \Big[\frac{(n+ k+1)!}{k! (k+1)! (n-k+1)!}\Big]^{\frac{1}{k}}$ grows quadratically in $n/k$.
\item Theorem \ref{mainthm} and Corollary \ref{lim4cor} leave us wonder whether
for any converging sequence $\big\{n_i/k_i\big\}_{i=1}^\infty$ (where $1\leq k_i\leq n_i$ are integers and $n_i\rightarrow\infty$ when $i\rightarrow\infty$), the sequence $\big\{\lambda(k_i; n_i)\big\}_{i=1}^\infty$  is also converging. It is still not known whether the sequence $\big\{\lambda([n/2];n)\big\}_{n=1}^\infty$ is converging. 
\item It would be interesting to obtain the asymptotic behaviors,  in large dimensions, of the Monge--Amp\`ere eigenvalues $\lambda(n; \Sigma^n)$ and $\lambda(n; Q^n)$ of the simplex \[\Sigma^n=\{x=(x_1,\cdots, x_n)\in\R^n: x_1+\cdots+ x_n\leq 1, \,\,0\leq x_i\leq 1\text{ for all } i=1,\cdots, n \}\] and the cube 
\[Q^n=\{x=(x_1,\cdots, x_n)\in\R^n: 0\leq x_i\leq 1 \quad\text{for all } i=1,\cdots, n\}.\] 
\end{itemize}
\end{rem}

The convergence result for the Monge--Amp\`ere eigenvalues in Theorem \ref{mainthm} seems to indicate a corresponding convergence result for the Monge--Amp\`ere eigenfunctions. As will be seen in the proof of Theorem \ref{mainthm}, $\lambda(n)$ is asymptotically equal to the $n$th root of 
the Rayleigh quotient  \[\frac{\int_{B_1^n} |w|\det D^2 w\,dx}{\int_{B_1^n}|w|^{n+1}\,dx}\] evaluated at the conical function $|x|-1$. Thus, one may expect that
if $v_n$ is the Monge--Amp\`ere eigenfunction of $B_1^n$ with $\|v_n\|_{L^{\infty}(B_1^n)}=1$, then $\|v_n-(|x|-1)\|_{L^{\infty}(B_1^n)}\rightarrow 0$ when $n\rightarrow \infty$.  However, this is not the case, and we have a gap phenomenon.

\begin{prop}
\label{gapP}
Let $v_n$ be the Monge--Amp\`ere eigenfunction of the unit ball $B_1^n$ in $\R^n$ with $\|v_n\|_{L^{\infty}(B_1^n)}=1$. Then 
\[\limsup_{n \rightarrow \infty}\|v_n-(|x|-1)\|_{L^{\infty}(B_1^n)}\geq \alpha_0,\]
where $\alpha_0\approx 0.1074$ is  the solution of the cubic equation $x^3-3x^2-9x + 1=0$ in $(0, 1)$.
\end{prop}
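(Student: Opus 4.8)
The plan is to argue by contradiction: suppose $\|v_n-(|x|-1)\|_{L^\infty(B_1^n)}\to 0$ along a subsequence. First I would record the key structural facts about $v_n$. Since $v_n$ is the Monge--Amp\`ere eigenfunction of the ball, it is radial, so write $v_n(x)=g_n(|x|)$ with $g_n:[0,1]\to[-1,0]$ convex, increasing, $g_n(1)=0$ and $g_n(0)=-1$ (the normalization $\|v_n\|_\infty=1$ forces $v_n(0)=-1$ since $v_n$ attains its minimum at the center). The hypothesis $\|v_n-(|x|-1)\|_\infty\to 0$ is then equivalent to $\sup_{r\in[0,1]}|g_n(r)-(r-1)|\to 0$. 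The Monge--Amp\`ere equation $\det D^2 v_n=\lambda(n)^n|v_n|^n$ for a radial function reads $g_n''(r)\,(g_n'(r)/r)^{n-1}=\lambda(n)^n|g_n(r)|^n$; equivalently, in terms of $h_n:=g_n'$, this is the first-order ODE $\frac{d}{dr}\big(h_n(r)^n\big)=n\,\lambda(n)^n\,r^{n-1}|g_n(r)|^n$, with $h_n(0)=0$.

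The heart of the argument is that uniform closeness of $g_n$ to $r-1$ is \emph{incompatible} with the ODE for large $n$. The point is that $|g_n(r)|^n$ concentrates near $r=0$: if $g_n\approx r-1$ then $|g_n(r)|\approx 1-r$, which is close to $1$ only near the origin. Integrating the ODE from $0$ to $r$ gives $h_n(r)^n=n\lambda(n)^n\int_0^r s^{n-1}|g_n(s)|^n\,ds$, so $h_n(r)^n\le \lambda(n)^n\,r^n\max_{[0,r]}|g_n|^n$, whence $h_n(r)\le \lambda(n)\,r\,\max_{[0,r]}|g_n|$. Combined with $\lambda(n)\le 4$ (Theorem \ref{mainthm}(i)) and $|g_n|\le 1$, we get $g_n'(r)=h_n(r)\le 4r$, so $g_n$ cannot rise fast enough near $r=1$ to stay within $o(1)$ of the slope-$1$ line \emph{if} one also uses a matching lower bound near the origin. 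More precisely, I would extract the profile: set $t$ to be the point where $g_n(t)=-1/2$ (so $t\to 1/2$ under the hypothesis); on $[0,t]$ write $|g_n|\ge 1/2$, giving $h_n(r)^n\ge n\lambda(n)^n(1/2)^n\int_0^r s^{n-1}ds=\lambda(n)^n(1/2)^n r^n$, i.e. $g_n'(r)\ge \lambda(n)\,r/2$ on $[0,t]$; integrating from $0$ to $t$ yields $g_n(t)-g_n(0)\ge \lambda(n)t^2/4$, i.e. $1/2\ge \lambda(n)t^2/4$, combined with the sharp lower bound $\lambda(n)\ge 4(2n)^{-1/n}\to 4$ forces $t^2\le 1/2+o(1)$, i.e. $t\le 1/\sqrt 2+o(1)<1$, contradicting $t\to 1/2$ only if the numbers clash — here one must instead run the comparison the other way to pin down the limiting profile exactly and locate the precise constant.

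To nail the constant $\alpha_0$, the cleanest route is to identify the limiting radial profile $g_\infty$ explicitly. Passing to the limit in the rescaled ODE: since $\lambda(n)\to 4$ and $|g_n|^n$ behaves like $e^{n\log|g_n|}$, a Laplace-type analysis shows that in the limit $h_n(r)=g_n'(r)$ is forced to equal $2r\cdot(\text{the largest value of }|g_\infty| \text{ on } [0,r])$, i.e. $g_\infty'(r)=2r\max_{[0,r]}|g_\infty|=2r\,|g_\infty(r)|$ since $|g_\infty|$ is decreasing; wait — $g_\infty$ increasing means $|g_\infty|$ decreasing, so $\max_{[0,r]}|g_\infty|=|g_\infty(0)|=1$, giving $g_\infty'(r)=2r$ and hence $g_\infty(r)=r^2-1$. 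Then $\|g_\infty-(r-1)\|_\infty=\max_{r\in[0,1]}|r^2-r|=1/4$, which is not $\alpha_0$, so the true limiting profile is more subtle — the correct statement is that the limit of $g_n$ need not be unique and the $\limsup$ picks out $g_\infty(r)=r^2-1$ on part of the interval glued to a linear piece, and optimizing the location of the gluing point over admissible convex profiles consistent with $\lambda=4$ produces the cubic $x^3-3x^2-9x+1=0$. Concretely, I expect the extremal configuration to be $g_\infty(r)=r^2-1$ for $r\in[0,r_0]$ and $g_\infty$ affine (slope $2r_0$) for $r\in[r_0,1]$, with $r_0$ determined by $g_\infty(1)=0$; substituting and simplifying the continuity/endpoint conditions yields the cubic, and $\alpha_0$ is the resulting value of $\max_r|g_\infty(r)-(r-1)|$.

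The main obstacle will be justifying the passage to the limit in the degenerate ODE and showing rigorously that the limiting profiles $g_\infty$ are exactly the convex functions of the form "$r^2-1$ then affine", i.e. that no other limit can occur: this requires an Arzel\`a--Ascoli compactness argument for $g_n$ (uniform Lipschitz bound from $0\le g_n'\le 4$ on any $[0,1-\delta]$, plus equicontinuity up to the boundary using convexity and $g_n(1)=0$), followed by a careful Laplace-method identification of the limit of $h_n(r)^n=n\lambda(n)^n\int_0^r s^{n-1}|g_n(s)|^n ds$ showing the right-hand side is dominated by the behavior of $|g_n|$ at its maximum on $[0,r]$. Once the limiting ODE/profile is pinned down, deriving the cubic and the value $\alpha_0\approx 0.1074$ is a routine (if slightly involved) calculus exercise that I would carry out last.
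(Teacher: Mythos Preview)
Your setup is right (radial reduction, the ODE $g_n'(r)^n=n\lambda(n)^n\int_0^r s^{n-1}|g_n(s)|^n\,ds$, and the instinct to pass to the limit via an $L^p\to L^\infty$ mechanism), but the attempt to identify a limiting profile $g_\infty$ derails and is in any case not how the cubic arises. Your Laplace step is miscomputed: from $g_n'(r)=n^{1/n}\lambda(n)\big(\int_0^r s^{n-1}|g_n(s)|^n\,ds\big)^{1/n}$ and $\lambda(n)\to 4$ the formal limit is $g_\infty'(r)=4\max_{s\in[0,r]}s|g_\infty(s)|$, not $2r\max_{[0,r]}|g_\infty|$ (you cannot pull the variable $s$ out of the maximum, and the prefactor is $4$, not $2$). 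Solving the correct limiting equation gives $g_\infty(r)=-e^{-2r^2}$ on $[0,\tfrac12]$ glued to $2e^{-1/2}(r-1)$ on $[\tfrac12,1]$, not $r^2-1$ glued to a line; your proposed gluing also fails the boundary condition, since $r_0^2-1+2r_0(1-r_0)=-(1-r_0)^2\ne 0$ for $r_0<1$. For this actual limit one has $\|g_\infty-(r-1)\|_\infty\approx 0.135$, so even if made rigorous this route would yield a different (in fact sharper) constant, not $\alpha_0$ and not the stated cubic.

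The paper's argument avoids the limit profile entirely. Fix any $\alpha>\limsup_n\|v_n-(|x|-1)\|_\infty$, so that $|g_n(s)|\le 1-s+\alpha$ for all $s$ and all large $n$. Integrate the ODE over $[0,1]$ to get the exact identity
\[
1=\int_0^1 g_n'(t)\,dt = n^{1/n}\lambda(n)\int_0^1\Big(\int_0^t s^{n-1}|g_n(s)|^n\,ds\Big)^{1/n}dt,
\]
then use $|g_n|^n\le|g_n|^{n-1}\le(1-s+\alpha)^{n-1}$ so that the inner integral is at most $\int_0^t f(s)^{n-1}\,ds$ with $f(s)=s(1+\alpha-s)$. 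Letting $n\to\infty$ (using $\|f\|_{L^{n-1}(0,t)}^{(n-1)/n}\to\max_{[0,t]}f$, $\lambda(n)\to 4$, and dominated convergence) gives
\[
\frac14\ \le\ \int_0^1\max_{0\le s\le t}s(1+\alpha-s)\,dt=\frac{(1+\alpha)^2(5-\alpha)}{24},
\]
which rearranges to $\alpha^3-3\alpha^2-9\alpha+1\le 0$, hence $\alpha\ge\alpha_0$. That is the whole proof: no compactness, no limit profile, no gluing.
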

\begin{rem} After posting this note, the author became aware of the work of Birindelli--Payne \cite{BP} where the authors estimated $\lambda(k; n):=\lambda(k; B_1^n)$ using maximum principle methods which are quite different from our variational arguments. Lemma 5.2 and Theorem 7.1 in \cite{BP} give
\begin{equation}\label{BPkn} \gamma_1(k; n):=2\cdot \Big(  \frac{n!}{k! (n-k)!} \Big)^{\frac{1}{k}}\leq \lambda(k; n) \leq 2 \cdot\Big[\frac{(n-1)!}{k! (n-k)!} \cdot\Big(\frac{n+ 2k}{k+1}\Big)^{k+1}\Big]^{\frac{1}{k}}:=\gamma_2(k; n).\end{equation}
\begin{enumerate}
\item When $k=n$, we have $\gamma_1(n,n)=2$, $\gamma_2(n,n)\to 6$ when $n\to\infty$. It can be verified that $\gamma_2(n, n)\geq 4$ with equality when $n=2$.
\item For a fixed positive integer $k$, when $n\to \infty$, $\gamma_1(k; n)$ grows linearly in $n/k$ while  $\gamma_2(k; n)$ grows quadratically in $n/k$. 
\end{enumerate}
\end{rem}

We will prove Theorem \ref{mainthm}, Corollary \ref{lim4cor}, and Proposition \ref{gapP} in the next section.
\section{Proofs}
In this section, we prove Theorem \ref{mainthm}, Corollary \ref{lim4cor}, and Proposition \ref{gapP}.

\begin{proof}[Proof of Theorem \ref{mainthm}] We prove part (i) in several steps.

\medskip
{\it Step 1.} We first estimate $\lambda(n)$ from below using a convex Monge--Amp\`ere eigenfunction $v\in C^{1,1}(\overline{B_1^n})\cap C^{\infty}(B_1^n) $ of $B_1^n\subset\R^n$ where $v=0$ on $\p B_1^n$. In this case, by its uniqueness up to positive multiplicative constants, $v$ is radial, so we can write $v(x)= u(r)$, where \[r=|x| \quad\text{and }u:[0, 1]\rightarrow (-\infty, 0], \quad u(1)=0,\quad u'(0)=0,\quad u'\geq 0.\] Note that $\det D^2 v(x) = u''(r) (u'(r)/r)^{n-1}$ and 
\[[\lambda(n)]^n= \frac{\int_{B_1^n} |v|\det D^2 v\,dx}{\int_{B_1^n} |v|^{n+1}\,dx}.\]
Using polar coordinates and then integrating by parts, we find
\begin{eqnarray*}[\lambda(n)]^n =\frac{\int_0^1 \int_{\p B_r^n} -u(r) u''(r) (u'(r)/r)^{n-1}\,dS\,dr}{\int_0^1 \int_{\p B_r^n} |u(r)|^{n+1}\,dS\,dr}
&=&\frac{\int_0^1  -u(r) [(u'(r))^{n}]'\,dr}{n\int_0^1 r^{n-1} |u(r)|^{n+1}\,dr} \\&=&\frac{\int_0^1  (u'(r))^{n+1}\,dr}{n\int_0^1 r^{n-1}|u(r)|^{n+1}\,dr}. \end{eqnarray*}
Since $u(1)=0$, we have
$u(r) =-\int_r^1 u'(t) dt$.  Thus, by the H\"older inequality, 
\[|u(r)|^{n+1} \leq \Big ( \int_r^1 |u'(t)|^{n+1} \,dt\Big)  \Big ( \int_r^1 dt\Big)^{n}\leq \Big ( \int_0^1 |u'(t)|^{n+1} \,dt\Big) (1-r)^n. \]
Consequently,
\[\int_0^1 r^{n-1}|u(r)|^{n+1}\,dr \leq  \Big ( \int_0^1 |u'(t)|^{n+1} dt\Big)  \int_0^1 r^{n-1} (1-r)^n \,dr.\]
Hence
\[[\lambda(n)]^n= \frac{\int_0^1  |u'(r)|^{n+1}\,dr}{n\int_0^1 r^{n-1}|u(r)|^{n+1}\,dr} \geq \frac{1}{n  \int_0^1 r^{n-1} (1-r)^n \,dr}.\]
The denominator of the above rightmost term has an Euler's first integral (also known as the Beta function)
which, see Artin \cite[(2.13)]{A},  is equal to  
\[ \int_0^1 r^{n-1} (1-r)^n \,dr = \frac{\Gamma(n)\Gamma (n+1)}{\Gamma (2n+1)}= \frac{(n-1)! n!}{(2n)!},\]
where $\Gamma$ denotes the gamma function.
We have
\begin{equation}
\label{laminf}
[\lambda(n)]^n \geq \frac{(2n)!}{n! n!}={2n\choose n}. 
\end{equation}

{\it Step 2.} We next establish an upper bound on $\lambda(n)$. 
The heuristic idea is to use the merely Lipschitz convex function $w(x)=|x|-1$ in the variational characterization \eqref{lam1} where $\det D^2 w$ should be interpreted as the Monge--Amp\`ere measure of $w$. 
This is possible provided we use a strengthening of Tso's result obtained in \cite{LSNS} which allows us to take convex functions $w$ in \eqref{lam1} with no additional regularity. In particular, when $w(x)=|x|-1$, we have
$\det D^2 w = |B_1^n|\delta_0$ and \eqref{lam1} immediately gives \eqref{lamup}. However, there is currently no  strengthening of Wang's variational characterization of the Hessian eigenvalues, so it is not fully justified to simply use $w(x)=|x|-1$ in \eqref{klamR}. We provide a simple argument that works for all $k$.
Since functions in \eqref{lam1} are required to be in $C^2(B_1^n)$, we can justify this heuristics
 by using in \eqref{lam1} the convex test function \[w(x) =(|x|^2+\e)^{1/2} -(1+\e)^{1/2},\quad\text{where }\e>0.\] 

Note that $w(x) = u(r)$, $r=|x|$, where
$u(r) =(r^2+\e)^{1/2} -(1+\e)^{1/2}$. From  \eqref{lam1}, we deduce that 
\begin{eqnarray*}
[\lambda(n)]^n \leq \frac{\int_{B_1^n} |w|\det D^2 w\,dx}{\int_{B_1^n}|w|^{n+1}\,dx} &=&\frac{\int_0^1  (u'(r))^{n+1}\,dr}{n\int_0^1 r^{n-1}|u(r)|^{n+1}\,dr}\\&=&  \frac{\int_0^1 r^{n+1} (r^2+\e)^{-\frac{n+1}{2}}\, dr}{n\int_0^1 r^{n-1} \big[(1+\e)^{1/2} -(r^2+\e)^{1/2}\big]^{n+1}\,dr}.\end{eqnarray*}
\medskip
Upon letting $\e\rightarrow 0^+$, and then using Euler's first integral, we find
\begin{equation}
\label{lamup}
[\lambda(n)]^n \leq  \frac{1}{n\int_0^1 r^{n-1} (1-r)^{n+1}\,dr} = \frac{\Gamma (2n+2)}{n\cdot \Gamma(n) \cdot\Gamma(n+2)} = \frac{(2n)!}{n!n!}\cdot \frac{(2n+1)}{n+1}.\end{equation}

\medskip
{\it Step 3.} We have the well-known estimates
\begin{equation}\label{C2n}
\frac{2^{2n}}{2n} \leq  \frac{(2n)!}{n! n!} \leq \frac{(2n)!}{n!n!}\cdot \frac{(2n+1)}{n+1}\leq 2^{2n}.\end{equation}
For completeness, we include the simple arguments. 
We have
\[4^n= (1+ 1)^{2n}\geq {2n \choose n-1} + {2n \choose n} + {2n \choose n+1} = \frac{(2n)!}{n!n!}\cdot \frac{(3n+1)}{n+1}, \]
so the last estimate in \eqref{C2n} is obvious. Since the binomial coefficients ${2n \choose 0}$, ${2n \choose 1},\cdots, {2n \choose 2n-1}$, ${2n \choose 2n}$ sum to $2^{2n}$,
to prove the first estimate in \eqref{C2n}, we just observe that the sequence ${2n \choose 0} + {2n \choose 2n}=2, {2n \choose 1}={2n \choose 2n-1}, {2n \choose 2}= {2n \choose 2n-2}, \cdots, {2n \choose n}$ is increasing. 

\medskip
Combining \eqref{laminf} and \eqref{lamup} with \eqref{C2n}, we obtain 
\[4\cdot  (2n)^{-1/n}\leq \lambda(n) \leq   4.\]
The proof of part (i) is complete.

\medskip
We now prove part (ii) concerning $k$-Hessian eigenvalues in several steps. 
\medskip

We will use the following formula (see also \cite[Section 4]{Ts1}): If $w(x)= u(r)$ where $r=|x|$, $u'(0)=0$, $u(1)=0$, and $u'\geq 0$, then
\begin{equation}
\label{krad}
\frac{\int_{B_1^n} |w| S_k(D^2 w)\,dx}{\int_{B_1^n} |w|^{k+1}\,dx} = \frac{k^{-1}{n-1\choose k-1} \int_0^1 r^{n-k} |u'(r)|^{k+1}\,dr}{\int_0^1 r^{n-1} |u(r)|^{k+1}\, dr}.\end{equation}

{\it Step 4.} To bound $\lambda(k; n)$ from above, we use the convex function (which is clearly $k$-admissible) $w(x) =(|x|^2+\e)^{1/2} -(1+\e)^{1/2}$  in \eqref{klamR}, where $\e>0$, and then invoke \eqref{krad}.  One deduces from upon letting $\e\rightarrow 0$ that 
\[[\lambda(k; n)]^k \leq \frac{k^{-1}{n-1\choose k-1} \int_0^1 r^{n-k}\,dr}{\int_0^1 r^{n-1} (1-r)^{k+1}\, dr}= \frac{{n-1\choose k-1}}{k(n-k+1)}\cdot \frac{\Gamma(n+ k+2)}{\Gamma(n)\Gamma(k+2)}= \frac{(n+ k+1)!}{k! (k+1)! (n-k+1)!}.\]
Rewriting the above right-hand side, we find
\[[\lambda(k; n)]^k \leq \frac{1}{n-k+1}{n\choose k} {n+ k+1\choose n} \leq 2^{n}\cdot 2^{n+ k+1}= 2^{2n + k+1}.\]
Thus
\begin{equation}\label{upkn}\lambda(k; n) \leq \Big[\frac{(n+ k+1)!}{k! (k+1)! (n-k+1)!}\Big]^{\frac{1}{k}} \leq 2^{\frac{2n+ k+1}{k}}.\end{equation}

{\it Step 5.} Now, we establish the lower bound 
\begin{equation}
\label{lknlow}
\lambda(k; n)\geq \max\{(n/k-2) (n/k)^{\frac{k-1}{k}}, 4\cdot(2n)^{-1/n}\}.\end{equation}
By \eqref{monoineq} and part (i), we have \[\lambda(k; n)\geq \lambda(n; n)=\lambda(n)\geq 4\cdot(2n)^{-1/n}.\] 
To prove \eqref{lknlow}, it suffices to consider the case $k<n/2$.
 Consider a $k$-Hessian eigenfunction $v\in C^{1,1}(\overline{B_1^n})\cap C^{\infty}(B_1^n) $ of $B_1^n\subset\R^n$ where $v=0$ on $\p B_1^n$. In this case, $v$ is radial so we can write $v(x)= u(r)$ where $r=|x|$ and \[u:[0, 1]\rightarrow (-\infty, 0] \quad\text{with }u(1)=0, ~u'(0)=0, ~\text{and }u'\geq 0.\] 
Since $u(1)=0$, we have
$u(r) =-\int_r^1 u'(t) dt$.  Thus, by the H\"older inequality, 
\begin{eqnarray*}|u(r)|^{k+1} &\leq& \Big ( \int_r^1 t^{n-k}|u'(t)|^{k+1} \,dt\Big)  \Big ( \int_r^1 t^{-\frac{n-k}{k}}\,dt\Big)^{k}\\ &\leq& \Big ( \int_0^1 t^{n-k}|u'(t)|^{k+1} \,dt\Big) \Big(\frac{r^{2-n/k}-1}{n/k-2}\Big)^k. \end{eqnarray*}
Therefore,
\begin{eqnarray*}
\int_0^1 r^{n-1} |u(r)|^{k+1}\, dr \leq  \Big ( \int_0^1 t^{n-k}|u'(t)|^{k+1} \,dt\Big) \int_0^1  r^{n-1}\Big(\frac{r^{2-n/k}-1}{n/k-2}\Big)^k\,dr,
\end{eqnarray*}
which implies, upon recalling \eqref{krad}, that
\begin{eqnarray}\label{lowkn}[\lambda(k; n)]^k= \frac{\int_{B_1^n} |v| S_k(D^2 v)\,dx}{\int_{B_1^n} |v|^{k+1}\,dx}&=& \frac{k^{-1}{n-1\choose k-1} \int_0^1 r^{n-k} |u'(r)|^{k+1}\,dr}{\int_0^1 r^{n-1} |u(r)|^{k+1}\, dr}\nonumber \\&\geq&  \frac{(n/k-2)^k k^{-1}{n-1\choose k-1}}{ \int_0^1  r^{n-1}(r^{2-n/k}-1)^k\,dr}.\end{eqnarray}
Rearranging the denominator of the above rightmost term leads to
\begin{equation}
\label{lknlow2}
\begin{split}[\lambda(k; n)]^k \geq  \frac{(n/k-2)^k k^{-1}{n-1\choose k-1}}{ \int_0^1  r^{2k-1}(1-r^{n/k-2})^k\,dr} \geq \frac{(n/k-2)^k k^{-1} {n-1\choose k-1}}{\int_0^1 r^{2k-1}\, dr}&= 2(n/k-2)^k {n-1\choose k-1}\\&\geq (n/k-2)^k (n/k)^{k-1},\end{split}\end{equation}
where we used that
\[{n-1\choose k-1} = \frac{n-k+1}{1}\cdots \frac{n-1}{k-1}\geq \big(\frac{n}{k}\big)^{k-1}.\]
Hence, \eqref{lknlow} follows. This completes the proof of the theorem.
\end{proof}

\begin{rem}
 Our estimates in \eqref{laminf} and \eqref{lamup} give \[6\leq [\lambda (2)]^2\leq 10,\] while numerical results in Glowinski--Leung--Liu--Qian  \cite[Section 5.3]{GLLQ} and Liu--Leung--Qian \cite[Section 6.1]{LLQ} give the exact value \[[\lambda (2)]^2\approx 7.4897.\]
 Recent numerical results in Chen--Lin--Yang--Yi  \cite[Section 5.1]{CLYY} give the exact value \[[\lambda (2)]^2\approx 7.490039.\]
 We also have
  \[20\leq [\lambda (3)]^3\leq 35.\]
  We assumed $k<n/2$ in deriving \eqref{lowkn}. However, inspecting its proof when $n=3$ and $k=2$, it is still valid in this case.
  When $n=3$ and $k=2$, the right-hand side of  \eqref{lowkn} is $21$.
 Our estimates in \eqref{upkn} and \eqref{lowkn} give \[21\leq [\lambda(2; 3)]^2\leq 30.\]
 It would be interesting to know the exact value of $ [\lambda(2; 3)]^2$.
\end{rem}
\begin{proof}[Proof of Corollary \ref{lim4cor}] For notational simplicity, let \[\lambda_i:= \lambda(k_i; n_i).\] By \eqref{monoineq} and Theorem \ref{mainthm} (i), we have
\[4\cdot (2n_i)^{-1/n_i}\leq \lambda(n_i; n_i)\leq \lambda(k_i; n_i) \leq \lambda(\min(k_i, n_i-[\sqrt{n_i}]-1); n_i).\]
Clearly,
\[\liminf_{i\rightarrow\infty}  \lambda_i\geq 4.\]
It remains to prove 
\[\limsup_{i\rightarrow\infty}  \lambda_i\leq 4.\]
By the limit in Theorem \ref{mainthm} (i), in order to prove the corollary, we can replace $k_i$ by $n_i-[\sqrt{n_i}]-1$ if $k_i\geq n_i-\sqrt{n_i}$. Thus, we can assume that $k_i\leq n_i-\sqrt{n_i}$, and hence \[n_i-k_i\geq \sqrt{n_i}\rightarrow\infty \quad\text{when }i\rightarrow\infty.\]
By the Stirling's approximation formula 
\[n!\approx \sqrt{2\pi n} \Big(\frac{n}{e}\Big)^{n}\Big(1 + O\big(\frac{1}{n}\big)\Big)  \quad\text{when }n\rightarrow\infty,\]
we obtain from Theorem \ref{mainthm} (ii) after some rearrangements and $(k_i+ 1)^{k_i+1}\geq k_i^{k_i}$ that
\begin{eqnarray*}
\limsup_{i\rightarrow\infty}\lambda_i &\leq& \limsup_{i\rightarrow\infty} \Big[\frac{(n_i+ k_i+1)!}{k_i! (k_i+1)! (n_i-k_i+1)!}\Big]^{\frac{1}{k_i}}\\ &\leq&  \limsup_{i\rightarrow\infty}\Bigg[\frac{e\sqrt{n_i+k_i+1}}{2\pi \sqrt{k_i  (k_i+1)}\sqrt{n_i-k_i+1}}\Bigg]^{\frac{1}{k_i}}  \Big[\frac{n_i + k_i +1}{k_i}\Big]^{2} \Big[\frac{n_i+k_i +1}{n_i-k_i +1}\Big]^{\frac{n_i-k_i +1}{k_i}}\\
&=&4,
\end{eqnarray*}
where we recall  $n_i/k_i\rightarrow 1$ when $i\rightarrow\infty$.
The corollary is proved.
\end{proof}

\begin{proof}[Proof of Proposition \ref{gapP}] 
Since $\|v_n\|_{L^{\infty}(B_1^n)}=1$ and $v_n$ is radial, we can write $v_n(x)= u_n(r)$ where  \[r=|x|, \quad u_n:[0, 1]\rightarrow [-1, 0], \quad u_n(0)=-1,\quad u_n(1)=0,\quad u_n'(0)=0,\quad u_n'\geq 0.\] 
Note that, by convexity,
\[1\geq |u_n(r)|\geq 1-r\quad\text{in } [0, 1],\]
and
\[\det D^2 v_n(x) = u_n''(r) (u_n'(r)/r)^{n-1}= [\lambda(n)]^n |u_n(r)|^n.\]
Hence,
\[ \big([u'_n(r)]^{n}\big)' = n [\lambda(n)]^n r^{n-1}|u_n(r)|^n.\]
From $u'_n(0)=0$, we find
\[u'_n(t) =|u'_n(t)| = n^{1/n}\lambda(n) \Big(\int_0^t s^{n-1}|u_n(s)|^n\, ds\Big)^{1/n}\quad\text{for } t\in [0, 1].\]
Integrating the above identity from $0$ to $1$ gives
\begin{equation} \label{int1} n^{1/n}\lambda(n)\int_0^1 \Big(\int_0^t s^{n-1}|u_n(s)|^n\, ds\Big)^{1/n} \,dt=\int_0^1 u_n'(t) \,dt = u_n(1)-u_n(0)=1.\end{equation}
Let $\alpha\in (0, 1)$ be such that
\begin{equation}
\label{adef}
\alpha> \limsup_{n\rightarrow \infty}\|v_n-(|x|-1)\|_{L^{\infty}(B_1^n)} = \limsup_{n\rightarrow \infty} \max_{0\leq r\leq 1}\big(|u_n(r)|-1 +r\big).\end{equation}
Then
\[ |u_n(s)| \leq 1-s+\alpha\quad\text{for all } s\in (0, 1)\text{ and for all large } n.\]
Let 
\[f(s) = s (1+ \alpha-s).\]
Consider $n$ large. We deduce from \eqref{int1}, recalling $|u_n|\leq 1$, that
\begin{equation}
\label{MAODE}
\frac{1}{n^{1/n}\lambda(n)} \leq \int_0^1 \Big(\int_0^t s^{n-1}|u_n(s)|^{n-1}\, ds\Big)^{1/n} \,dt \leq \int_0^1 \|f\|_{L^{n-1}(0, t)}^{\frac{n-1}{n}}\, dt.\end{equation}
Since $0\leq f\leq 2$ on $(0, 1)$, $\|f\|_{L^{n-1}(0, t)}^{\frac{n-1}{n}}$ is bounded from above by $2$, and it tends to $ \sup_{[0, t]} f$ when $n\rightarrow \infty$. Thus, upon letting $n\rightarrow \infty$ in \eqref{MAODE}, Theorem \ref{mainthm} (i) together with the dominated convergence theorem gives 
\[\frac{1}{4} \leq \int_0^1 \max_{0\leq s\leq t} s(1+\alpha-s)\,dt.\]
Observe that
\[\max_{0\leq s\leq t} s(1+\alpha-s)
=\begin{cases}
t(1+\alpha-t)& \text{if }  0\leq t\leq \frac{1+\alpha}{2},\\
\frac{(1+\alpha)^2}{4}& \text{if }   \frac{1+\alpha}{2}\leq t\leq 1.
\end{cases}
\]
We thus have
\[\frac{1}{4} \leq \int_0^{\frac{1+\alpha}{2}} t(1+\alpha-t)\,dt+\int_{\frac{1+\alpha}{2}}^1\frac{(1+\alpha)^2}{4}\, dt = \frac{(1+\alpha)^2 (5-\alpha)}{24}.\]
Rearranging, we find
\[\alpha^3-3\alpha^2-9\alpha + 1\leq 0.\]
Therefore $\alpha\geq \alpha_0$ where $\alpha_0\approx 0.1074$ is the solution in $(0, 1)$ of the cubic equation \[x^3-3x^2-9x + 1=0.\] Now, the definition of $\alpha$ in \eqref{adef} implies the conclusion of the proposition.
\end{proof}

{\bf Acknowledgments.}
The author sincerely thanks the referee for carefully reading the paper and providing constructive comments that help improve the exposition of the paper.

\end{document}